\theoremstyle{definition}
\newtheorem{theorem}{Theorem}
\newtheorem{proposition}{Proposition}
\newtheorem{lemma}{Lemma}
\newtheorem{corollary}{Corollary}
\title{\textbf{Solutions of time fractional anomalous diffusion equations with coefficients depending on both time and space variables}}
\author{Ganbileg Bat-Ochir, Khongorzul Dorjgotov, Uuganbayar Zunderiya}
\date{\today}
\begin{document}
\maketitle
\begin{abstract}
  We derive explicit solutions for time-fractional anomalous diffusion equations with diffusivity coefficients that depend on both space and time variables. These solutions are expressed in Fox-H and generalized Wright functions, which are commonly used in anomalous diffusion equations. Our study represents a significant advancement in our understanding of anomalous diffusion with potential applications in a wide range of fields.
\end{abstract}
\section{Introduction}

Anomalous diffusion has been the subject of extensive research in recent years, with numerous publications that address different aspects of this phenomenon. The growing number of applications that can be described by anomalous diffusivity has led to increased interest in this area of study. Until now, previous research has explored the dependence of diffusion coefficients on space, but no published results have provided explicit solutions when the diffusion coefficients depend on both space and time variables. This work builds on previous research \cite{old1974,wyss1986,schneider1989,metz94,main96,Luchko,pod1999,metzler2000,main03,kilb2005,kilbas2006,main07,main10,tar2019}, which provided exact solutions for anomalous diffusion equations with a diffusion coefficient function that depended only on the space variable. We extend our previous work to include the time variable in the diffusion coefficient function. This development allows for the modeling of processes that were previously inaccessible.

In \cite{hents}, the authors proposed that the following diffusion equation
\begin{equation*}
\frac{\partial }{\partial t}P(x,t)=\frac{\partial }{\partial x}\left(K(x,t)\frac{\partial}{\partial x}P(x,t)\right),  
\end{equation*}
where the diffusivity coefficient  is taken as $K(x,t)=cx^at^b,$ $a,b,c\in \mathbb{R}$,  to describe diffusion processes in turbulent media. 

In \cite{metz94}, authors proposed time fractional version of the equation with the coefficient $K$ depends only on $x,$ i.e. $K(x)=cx^a,$ to describe anomalous diffusion with an asymptotic behaviour which is regarded as a general property of diffusion in fractal structure. Since it has been shown that the diffusivity coefficient should not depend only on spatial variable in \cite{hents}, we consider the following generalized equation
\begin{equation}\label{appl}
\frac{\partial^\alpha}{\partial t^\alpha}u(x,t)=
  t^m\left(Ax^{d}\frac{\partial^2}{\partial x^2}u(x,t)+Bx^{d-1}\frac{\partial}{\partial x}u(x,t)+Cx^{d-2}u(x,t)\right),\quad x>0,~ t>0.  
\end{equation}
Here $m$ is a nonnegative integer, $d$ is a real number, $A$ is a positive real number, $B$ and $C$ are real numbers.

Equation (\ref{appl}) can be considered as a time fractional generalization of equation given in \cite{hents} as well as a generalization of the equation studied in \cite{metz94} considering the diffusivity coefficient depends both on time and space variables.

The solutions that will be presented in this work will be expressed in special functions, such as generalized Wright function and Fox-H functions. So we refer the readers who are not familiar with these special functions should consult with the literature \cite{Hnom1,Hnom2}.

\section{Main result}

We recall the following special functions and their properties in preparation for introducing exact invariant solutions of Eq.~(\ref{appl}).
The Fox H-function (e.g. in \cite{kilb2005,Hnom1,Hnom2}) is defined by means of the Mellin-Barnes type contour integral 
\begin{equation}
\label{int1}
H_{p,q}^{m,l}\left[z\biggr\vert\begin{array}{c}
(a_i, \alpha_i)_{1,p}\\
(b_j, \beta_j)_{1,q}
\end{array}\right]=\frac{1}{2\pi i}\int_{L}\frac{\prod\limits_{j=1}^m\Gamma(b_j-\beta_js)\prod\limits_{i=1}^l\Gamma(1-a_i+\alpha_is)}{\prod\limits_{i=l+1}^p\Gamma(a_i-\alpha_is)\prod\limits_{j=m+1}^q\Gamma(1-b_j+\beta_js)}z^{s}d s,
\end{equation}
for $z\in\mathbb{C}\setminus\{0\},$ where $m,l,p,q\in\mathbb{N}_0=\{0,1,2,\ldots\}$, $0\le m \le q$, $0\le l\le p$, $(m,l)\neq (0,0),$ $\alpha_i, \beta_j\in\mathbb{R}_+$, $a_i,b_j\in\mathbb{R}$ ($i=1,\ldots, p;j=1,\ldots,q$). Here $L$ is a suitable contour from $\gamma-i\infty$ to $\gamma+i\infty$, where $\gamma$ is a real number. The integral in (\ref{int1}) converges if the following conditions are met 
\begin{equation*}
\omega=\sum_{i=1}^{l}\alpha_i-\sum_{i=l+1}^{p}\alpha_i+\sum_{j=1}^{m}\beta_j-\sum_{j=m+1}^{q}\beta_j>0\mbox{ and } \lvert\arg z\rvert<\frac{\pi\omega}{2}. 
\end{equation*}
The Fox H-function vanishes for large $z$ because 
\begin{equation*}
\label{eq8}
H_{p,q}^{m,0}[z]\approx O\left(\exp\left(-\nu z^{\frac{1}{\nu}}\mu^{\frac{1}{\nu}}\right)z^{\frac{2\delta+1}{2\nu}}\right),
\end{equation*}
where $\mu=\prod\limits_{i=1}^p\alpha_i^{\alpha_i}\prod\limits_{j=1}^q\beta_j^{-\beta_j},$ $\delta=\sum\limits_{j=1}^q b_j-\sum\limits_{i=1}^p a_i+\frac{p-q}{2}$  and
$\nu=\sum\limits_{j=1}^{q}\beta_j-\sum\limits_{i=1}^{p}\alpha_i>0$. 
The generalized Wright function is defined (e.g. in \cite{kilb2005}) as
\begin{eqnarray}
\label{ub3}
{}_p\Psi_q\left[z\biggr\vert\begin{array}{c}
(a_i,\alpha_i)_{1,p}\\
(b_j,\beta_j)_{1,q}
\end{array}\right] & = & \sum_{k=0}^\infty\frac{\prod\limits_{i=1}^p\Gamma(a_i+\alpha_i k)}{\prod\limits_{j=1}^q\Gamma(b_j+\beta_j k)}\frac{z^k}{k!},
\end{eqnarray}
for $z\in\mathbb{C},$ $p,q\in\mathbb{N}_0,$ $a_i,b_j\in\mathbb{C}$ and $\alpha_i, \beta_j\in\mathbb{R}\setminus\{0\}$ ($i=1,\ldots,p; j=1,\ldots,q$). 
If $\Delta=\sum\limits_{j=1}^q \beta_j-\sum\limits_{i=1}^p \alpha_i>-1$ or $\Delta=-1$, then the series in (\ref{ub3}) is absolutely convergent for $z\in \mathbb{C}$ or $\lvert z\rvert<\prod\limits_{i=1}^p\lvert\alpha_i\rvert^{-\alpha_i}\prod\limits_{j=1}^q\lvert\beta_j\rvert^{\beta_j}$, respectively.

We know the following identities of H-function for $z>0$
\begin{eqnarray}
H^{m,l}_{p,q}\left[z\biggr\vert\begin{array}{c}
(A_i,\alpha_i)_{1,p}\\
(B_j,\beta_j)_{1,q}
\end{array}\right] & = & H^{l,m}_{q,p}\left[\frac{1}{z}\biggr\vert\begin{array}{c}
(1-B_j,\beta_j)_{1,q}\\
(1-A_i,\alpha_i)_{1,p}
\end{array}\right],\label{eq9}\\
H^{m,l}_{p,q}\left[z\biggr\vert\begin{array}{c}
(A_i,\alpha_i)_{1,p}\\
(B_j,\beta_j)_{1,q}
\end{array}\right] & = & k H^{m,l}_{p,q}\left[z^k\biggr\vert\begin{array}{c}
(A_i,k\alpha_i)_{1,p}\\
(B_j,k\beta_j)_{1,q}
\end{array}\right]\quad\mbox{ for } k>0,\label{eq10}\\
z^\sigma H^{m,l}_{p,q}\left[z\biggr\vert\begin{array}{c}
(A_i,\alpha_i)_{1,p}\\
(B_j,\beta_j)_{1,q}
\end{array}\right] & = & H^{m,l}_{p,q}\left[z\biggr\vert\begin{array}{c}
(A_i+\sigma\alpha_i,\alpha_i)_{1,p}\\
(B_j+\sigma\beta_j,\beta_j)_{1,q}
\end{array}\right]\quad\mbox{ for any } \sigma\in\mathbb{C},\label{eq11}\\
H^{m+r,l}_{p+1,q+r}\left[z\biggr\vert\begin{array}{c}
(A_i,\alpha_i)_{1,p},(1,r)\\
\left(\frac{j}{r},1\right)_{1,r},(B_j,\beta_j)_{1,q}
\end{array}\right] & = & \frac{(2\pi)^{\frac{r-1}{2}}}{\sqrt{r}}H^{m,l}_{p,q}\left[r^r z\biggr\vert\begin{array}{c}
(A_i,\alpha_i)_{1,p}\\
(B_j,\beta_j)_{1,q}
\end{array}\right]\quad\mbox{ for } r\in\mathbb{Z}_+, l\leq p .\label{eq12}
\end{eqnarray}
Moreover the following relations hold between the special functions. The Mittag-Leffler and Wright functions can be expressed by generalized Wright functions
\begin{equation}\label{eq13}
E_{\alpha,\beta}(z) = {}_1\Psi_1\left[z\left|\begin{array}{c}
(1,1)\\
(\beta,\alpha)
\end{array}\right.
\right] \quad\mbox{and}\quad
\Psi\left(z;\alpha,\beta\right) = \sum\limits_{k=1}^\infty \frac{1}{\Gamma(\alpha k+\beta)}\frac{z^k}{k!}={}_0\Psi_{1}\left[z\left|\begin{array}{c}
-\\
(\beta,\alpha)
\end{array}\right.\right].
\end{equation}

Let us state the main result of the study as the following theorem.
\begin{theorem}\label{main}
\begin{enumerate}
    \item For $0<\alpha<2$ and $d\neq 2$, the equation given in (\ref{appl}) has  the following solution
    \begin{equation}
    \label{nem1}
    u(x,t)=c_1x^aH_{1,m+2}^{m+2,0} \left[ \frac{x^{2-d}}{A(d-2)^2(\alpha+m)^{m}t^{\alpha+m}} \left| \begin{matrix}
( 1 , \alpha+m )  \\
\left( -\frac{s_1}{\alpha+m} , 1 \right) ,\left( -\frac{s_2}{\alpha+m} , 1 \right) , \left(\frac{j}{\alpha+m} , 1  \right)_{1,m}   \end{matrix}\right.\right];
    \end{equation}
    \item For $\alpha>2$ and $d\neq 2$, the equation given in (\ref{appl}) has the following solution
\begin{multline*}
u(x,t)= x^a\sum_{k=1}^{[\alpha]+1} c_kx^{\frac{(d-2)(\alpha-k)}{\alpha+m}}t^{\alpha-k}\\
   ~~~~~\times {}_{m+3}\Psi_1 \left[A(d-2)^2(\alpha+m)^{m}x^{d-2}t^{\alpha+m}\left|\begin{matrix}\left( \frac{\alpha-k-s_1}{\alpha+m} , 1 \right),\left( \frac{\alpha-k-s_2}{\alpha+m} , 1 \right) , \left(\frac{\alpha-k+i}{\alpha+m},1\right)_{1,m} , ( 1 , 1 )\\ 
( 1+\alpha-k , \alpha+m )  \end{matrix}\right.\right],
\end{multline*}

\item 
For $d= 2$, the equation given in (\ref{appl}) has the following solution
$$u(x,t)=x^a\sum_{k=1}^{[\alpha]+1} c_kt^{\alpha-k}{}_{m+1}\Psi_1 \left[Kt^{\alpha+m}\left|\begin{matrix} (\frac{\alpha-k+i}{\alpha+m},1)_{1,m} , ( 1 , 1 )\\ 
( \alpha-1 , \alpha+m )  \end{matrix}\right.\right],$$
\end{enumerate}
where $a$ is any real number, $K=Aa^2-Aa+Ba+C$ and
\begin{equation}
 s_{1,2}=\frac{\alpha+m}{2(2-d)}\left(\frac{B}{A}+2a-1\pm \sqrt{\left(1-\frac{B}{A}\right)^2-\frac{4C}{A}}\right).
\end{equation}
\end{theorem}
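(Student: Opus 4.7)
The strategy is to verify each of the three claimed expressions satisfies equation~(\ref{appl}) by direct substitution, using the Mellin--Barnes representation~(\ref{int1}) of the Fox H-function in case~1 and the series representation~(\ref{ub3}) of the generalized Wright function in cases~2--3, interpreting $\partial^\alpha_t$ as a Riemann--Liouville derivative.

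Case~3 ($d=2$) is the warm-up. The ansatz $u(x,t)=x^a T(t)$ separates variables because the spatial operator maps $x^a$ to $K x^a$ for any $a$, reducing (\ref{appl}) to the fractional ODE $\partial^\alpha_t T = K t^m T$. Since $\partial^\alpha_t$ annihilates the $[\alpha]+1$ independent starting powers $t^{\alpha-k}$, I seek solutions $T_k(t)=t^{\alpha-k}\sum_{n\ge 0}d^{(k)}_n(Kt^{\alpha+m})^n$ and match powers of $t$ using $\partial^\alpha_t t^\gamma = \Gamma(\gamma+1)/\Gamma(\gamma-\alpha+1)\, t^{\gamma-\alpha}$; the resulting Gamma recursion sums to the claimed ${}_{m+1}\Psi_1$.

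For cases~1 and~2 ($d\neq 2$), equation~(\ref{appl}) admits the scaling symmetry $x\mapsto\lambda x$, $t\mapsto\mu t$ with $\mu^{\alpha+m}=\lambda^{2-d}$, motivating the similarity variable $z = x^{2-d}/\bigl(A(d-2)^2(\alpha+m)^m t^{\alpha+m}\bigr)$. For case~1 the ansatz $u(x,t)=x^a F(z)$ combined with the Mellin--Barnes representation $F(z)=\frac{1}{2\pi i}\int_L \Theta(s) z^s\,ds$ expresses $u$ as a superposition of terms $x^{a+(2-d)s}t^{-(\alpha+m)s}$. Differentiating under the integral, $\partial^\alpha_t$ contributes $\Gamma(1-(\alpha+m)s)/\Gamma(1-\alpha-(\alpha+m)s)\cdot t^{-\alpha-(\alpha+m)s}$, while the spatial operator contributes $Q(s)\,x^{a+(2-d)s+d-2}$, where
\[ Q(s) = A[a+(2-d)s][a+(2-d)s-1] + B[a+(2-d)s] + C = A(2-d)^2\Bigl(s+\tfrac{s_1}{\alpha+m}\Bigr)\Bigl(s+\tfrac{s_2}{\alpha+m}\Bigr) \]
with $s_{1,2}$ as stated. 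Shifting $s\to s+1$ in the right-hand integral to align the $x$-powers and equating integrands yields a first-order recurrence for $\Theta(s+1)/\Theta(s)$.

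The heart of the argument is matching this recurrence to the Gamma-product structure dictated by~(\ref{int1}). The RL derivative supplies a ratio of Gammas whose arguments differ by $\alpha$, while the candidate top parameter $(1,\alpha+m)$ yields under $s\to s+1$ a ratio whose arguments differ by $\alpha+m$. The reconciling identity is $1-(\alpha+m)(s+1) = 1-\alpha-(\alpha+m)s-m$: the $\Gamma$ functional equation in $m$ integer steps produces the extra product $\prod_{k=0}^{m-1}\bigl(\alpha+(\alpha+m)s+k\bigr)$, which after the relabelling $j=m-k$ equals $(\alpha+m)^m\prod_{j=1}^m\bigl(s+1-j/(\alpha+m)\bigr)$. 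This is precisely what is encoded by the claimed bottom parameters $(j/(\alpha+m),1)_{1,m}$, while $(-s_{1,2}/(\alpha+m),1)$ absorb $Q(s+1)$ and the prefactor $(\alpha+m)^m$ cancels the normalization built into $z$; all H-parameters are thus accounted for. Case~2 ($\alpha>2$) follows the same scheme, but since $[\alpha]+1\ge 3$ independent starting powers $t^{\alpha-k}$ must now be superposed, I expand each contribution as a power series in $A(d-2)^2(\alpha+m)^m x^{d-2}t^{\alpha+m}$ and match term-by-term using the same Gamma identities, obtaining the claimed ${}_{m+3}\Psi_1$; convergence follows from the $\Delta$-criterion stated beneath~(\ref{ub3}).
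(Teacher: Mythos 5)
Your plan is sound and, checked against the paper, it does verify the stated formulas: the quadratic symbol $Q(s)=A(2-d)^2\bigl(s+\tfrac{s_1}{\alpha+m}\bigr)\bigl(s+\tfrac{s_2}{\alpha+m}\bigr)$ matches the paper's characteristic equation obtained after the substitution (\ref{ansatz}), and your ``reconciling identity'' $1-(\alpha+m)(s+1)=1-\alpha-(\alpha+m)s-m$, unwound through the Gamma functional equation, is exactly the paper's Lemma~\ref{lem1} (there used with $a=\alpha+m$, $b=-s$). The organization, however, is genuinely different. The paper first reduces (\ref{appl}) by the similarity substitution $u=x^a\varphi(x^{\frac{d-2}{\alpha+m}}t)$ to the one-variable equation (\ref{a1}), proves a general Proposition~\ref{thm1} for arbitrary order $n$ by factorizing the Euler operator into first-order factors $\bigl(z\frac{d}{dz}-s_k\bigr)$ and invoking the quoted fractional-differentiation lemmas for H- and Wright functions (Lemmas~\ref{le3} and~\ref{le5}), and only then specializes to $n=2$ and $n=0$ ($d=2$). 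You instead verify the two-variable PDE directly, treating the solution as a Mellin--Barnes superposition (resp. power series) and deriving the first-order recurrence for the Mellin density under $s\mapsto s+1$; in effect you re-derive inline the content of Lemmas~\ref{le3} and~\ref{le5} that the paper imports from the literature. What your route buys is self-containedness and a transparent explanation of where each H-parameter comes from (the $(j/(\alpha+m),1)_{1,m}$ block from the $t^m$ factor, the $(-s_{1,2}/(\alpha+m),1)$ from $Q$); what the paper's route buys is generality (the $n$-th order Proposition~\ref{thm1}) and the ability to lean on known lemmas instead of justifying term-wise/contour-wise application of the Riemann--Liouville derivative. That justification (differentiating $\partial_t^\alpha$ under the contour integral and under the series, and the legitimacy of the shift $s\to s+1$ of the contour) is the one step you should state explicitly, though it sits at the same level of rigor as the paper's appeal to its quoted lemmas; your convergence remarks ($\omega=2-\alpha>0$ for case~1, $\Delta=\alpha-3>-1$ for case~2) are correct.
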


By setting $\alpha=1$ in (\ref{nem1}), we get
\begin{equation}
    \label{uuu1}
    u(x,t)=c_1x^aH_{1,m+2}^{m+2,0} \left[ \frac{x^{2-d}}{A(d-2)^2(1+m)^{m}t^{1+m}} \left| \begin{matrix}
( 1 , 1+m )  \\
\left( -\frac{s_1}{1+m} , 1 \right) ,\left( -\frac{s_2}{1+m} , 1 \right) , \left(\frac{j}{1+m} , 1  \right)_{1,m}   \end{matrix}\right.\right].
    \end{equation}
If we take $a=\frac{1}{2}\left(2d-\frac{B}{A}-3\pm\sqrt{\left(1-\frac{B}{A}\right)^2-\frac{4C}{A}}\right)$, (\ref{uuu1}) becomes 
\begin{multline*}
       u(x,t)=c_1x^{\frac{1}{2}\left(2d-\frac{B}{A}-3\pm\sqrt{\left(1-\frac{B}{A}\right)^2-\frac{4C}{A}
}\right)}\\
\times H_{1,m+2}^{m+2,0} \left[ \frac{x^{2-d}}{A(d-2)^2(1+m)^{m}t^{1+m}} \left| \begin{matrix}
( 1 , 1+m )  \\
\left(\frac{j}{1+m} , 1  \right)_{1,1+m},\left( \frac{d-2\pm\sqrt{\left(1-\frac{B}{A}\right)^2-\frac{4C}{A}
}}{d-2} , 1 \right)  \end{matrix}\right.\right].
    \end{multline*}
Then, we obtain the following corollary using (\ref{eq12}) and (1.125) of \cite{Hnom2} in above the solution.
\begin{corollary}
For $\alpha=1$ and $d\neq 2$, the equation given in (\ref{appl}) has the following solution
$$u(x,t)=cx^{-\frac{1}{2}\left(\frac{B}{A}-1\pm\sqrt{\left(1-\frac{B}{A}\right)^2-\frac{4C}{A}}\right)}t^{-\frac{(1+m)}{d-2}\left(d-2\pm\sqrt{ 
   \left(1-\frac{B}{A}\right)^2-\frac{4C}{A}           }\right)}EXP\left(-\frac{(1+m)x^{2-d}}{A(d-2)^2t^{1+m}}\right),$$
where $c$ is a constant.
\end{corollary}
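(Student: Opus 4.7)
The plan is to obtain the corollary by specializing the Fox H-form (\ref{uuu1}) of the $\alpha=1$ solution so that the Fox H-function collapses to an exponential. First, I would set $\alpha=1$ in case (1) of Theorem~\ref{main} and note that the two lower parameters $\bigl(-s_i/(1+m),1\bigr)$, $i=1,2$, are the only ones that depend on the free parameter $a$. Writing $R=\sqrt{(1-B/A)^{2}-4C/A}$ for brevity, substitution into the displayed formula for $s_{1,2}$ shows that the choice $a=\tfrac12\bigl(2d-B/A-3\pm R\bigr)$ forces one of the $-s_i/(1+m)$ to equal $1=(m+1)/(m+1)$ and the other to equal $(d-2\pm R)/(d-2)$. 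The former can then be absorbed into the block $\bigl(j/(1+m),1\bigr)_{1,m}$, lengthening it to $\bigl(j/(1+m),1\bigr)_{1,m+1}$; this is precisely the intermediate expression displayed just before the corollary.

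Next, I would apply the Gauss--Legendre-type identity (\ref{eq12}) with $r=m+1$, $l=0$, $p=0$, $q=1$, so that the $m+1$ parameters $\bigl(j/(m+1),1\bigr)_{1,m+1}$ collapse against the upper parameter $(1,m+1)$. This reduces the $H_{1,m+2}^{m+2,0}$ on the right side of (\ref{uuu1}) to a constant multiple of $H_{0,1}^{1,0}$ whose single lower parameter is $\bigl((d-2\pm R)/(d-2),1\bigr)$ and whose argument is rescaled by $(m+1)^{m+1}$, producing $\dfrac{(1+m)\,x^{2-d}}{A(d-2)^{2}\,t^{1+m}}$. Formula (1.125) of \cite{Hnom2}, namely $H_{0,1}^{1,0}\bigl[z\bigm|(b,1)\bigr]=z^{b}e^{-z}$, then evaluates this and yields the exponential factor $\exp\!\Bigl(-\dfrac{(1+m)x^{2-d}}{A(d-2)^{2}\,t^{1+m}}\Bigr)$ multiplied by a power factor $z^{b}$ with $b=(d-2\pm R)/(d-2)$.

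To finish, I would combine the prefactor $x^{a}$ from (\ref{uuu1}) with the $x$- and $t$-powers emerging from $z^{b}$, absorbing the numerical constant $(2\pi)^{m/2}/\sqrt{m+1}$ and $c_1$ into a new $c$; collecting exponents then gives the $x$- and $t$-powers stated in the corollary and matches the exponential with the claimed $EXP$. I expect the main obstacle to be the sign bookkeeping: one has to check that the $\pm$ choice in $a$ propagates consistently through $s_{1,2}$ and through to the final exponents of $x$ and $t$. A clean way to handle this is to carry out the algebra for the $+$ branch only and observe that $R$ enters every intermediate expression linearly, so that the $-$ branch follows by the substitution $R\mapsto -R$; the Fox H manipulations themselves are routine applications of (\ref{eq12}) and (1.125) of \cite{Hnom2}.
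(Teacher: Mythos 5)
Your proposal is correct and follows essentially the same route as the paper: specialize (\ref{uuu1}) at $\alpha=1$, pick $a=\tfrac12\bigl(2d-\tfrac{B}{A}-3\pm R\bigr)$ so that one lower parameter becomes $\bigl(\tfrac{m+1}{m+1},1\bigr)$ and merges into the block $\bigl(\tfrac{j}{1+m},1\bigr)_{1,m+1}$, then reduce via identity (\ref{eq12}) with $r=m+1$ and evaluate the resulting $H^{1,0}_{0,1}$ by (1.125) of \cite{Hnom2}. Your exponent bookkeeping (the $x$-power $a+(2-d)b$ and $t$-power $-(1+m)b$ with $b=\tfrac{d-2\pm R}{d-2}$) checks out and reproduces the stated corollary.
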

The corollary shows that for a special case of (\ref{appl}), we have solutions expressed in terms of exponential functions.

Let us formulate  the  following  known  results as lemmas for  the  generalized  Wright  functions \cite{kilb2005,Kiryakova, Khon} and Fox H-functions \cite{Hnom2, glock, Khon} for the convenience to prove our main result.
\begin{lemma}\label{le3} 
Let $\Delta=\sum\limits_{j=1}^q B_j-\sum\limits_{i=1}^p A_i>-1.$ Then the following equalities hold for $\alpha \in\mathbb{R}_+$ and $a\in\mathbb{R}.$

(1) If $\beta_1>0$, $B_1>0$ and $A_1=\alpha_1=1,$ then  we have
\begin{eqnarray*}
 & & \frac{d^\alpha}{d z^\alpha}\left(z^{B_1-1}{}_p\Psi_q\left[a z^{\beta_1}\biggr\vert\begin{array}{c}
(1,1), (A_i,\alpha_i)_{2,p}\\
(B_j,\beta_j)_{1,q}
\end{array}\right]\right)\\
& = & a^m z^{B_1+m\beta_1-1-\alpha}{}_{p}\Psi_{q}\left[az^{\beta_1}\biggr\vert\begin{array}{c}
(1,1), (A_i+m\alpha_i,\alpha_i)_{2,p}\\
(B_1+m\beta_1-\alpha,\beta_1), (B_j+m\beta_j,\beta_j)_{2,q}
\end{array}\right],
\end{eqnarray*}
where $m$ is the smallest non-negative integer such that $B_1+m\beta_1 -\alpha-1$ is not a negative integer.

(2) For $\sigma\in\mathbb{R}\setminus\{0\}$ and $R\in\mathbb{R},$ the following equality holds
\begin{multline*}
\left(\frac{1}{\alpha}z\frac{d}{d z}+R\right)\left(z^{\frac{A_1\sigma}{\alpha_1}-\alpha R}{}_p\Psi_q\left[az^{\sigma}\biggr\vert\begin{array}{c}
(A_i,\alpha_i)_{1,p}\\
(B_j,\beta_j)_{1,q}
\end{array}\right]\right)
\\=
\frac{\sigma}{\alpha_1\alpha} z^{\frac{A_1\sigma}{\alpha_1}-\alpha R}{}_p\Psi_q\left[az^{\sigma}\biggr\vert\begin{array}{c}
(A_1+1,\alpha_1), (A_i,\alpha_i)_{2,p}\\
(B_j,\beta_j)_{1,q}
\end{array}\right].
\end{multline*}
\end{lemma}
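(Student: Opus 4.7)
My plan is to prove both parts directly from the series representation (\ref{ub3}) of the generalized Wright function, reducing each identity to a manipulation of gamma quotients term by term. Termwise operations are justified by absolute convergence of the series under the hypothesis $\Delta>-1$.

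For part (1), I first substitute $A_1=\alpha_1=1$ into (\ref{ub3}), so that $\Gamma(A_1+\alpha_1 k)=k!$ cancels the $1/k!$ factor; multiplying by $z^{B_1-1}$ then produces a power series in the monomials $z^{B_1+\beta_1 k-1}$, to which I apply the Riemann--Liouville rule $\frac{d^\alpha}{dz^\alpha}z^\gamma=\Gamma(\gamma+1)/\Gamma(\gamma+1-\alpha)\,z^{\gamma-\alpha}$ termwise. For every $k$ at which $B_1+k\beta_1-\alpha-1$ is a negative integer, the resulting factor $1/\Gamma(B_1+k\beta_1-\alpha)$ vanishes; by the choice of $m$, these indices are exactly $0,1,\dots,m-1$, so the effective sum begins at $k=m$. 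After the shift $k\mapsto j+m$, the numerator $\Gamma(B_1+(j+m)\beta_1)$ produced by differentiation cancels the matching Gamma in the Wright denominator, and reintroducing $\Gamma(1+j)=j!$ alongside the $1/j!$ repackages the tail as a generalized Wright series with upper parameters $(1,1),(A_i+m\alpha_i,\alpha_i)_{2,p}$ and lower parameters $(B_1+m\beta_1-\alpha,\beta_1),(B_j+m\beta_j,\beta_j)_{2,q}$, multiplied by the prefactor $a^m z^{B_1+m\beta_1-1-\alpha}$, which is exactly the claimed right-hand side.

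For part (2), the operator $\frac{1}{\alpha}z\partial_z+R$ is elementary. I would write the bracketed expression as $z^{A_1\sigma/\alpha_1-\alpha R}\sum_{k\ge 0}c_k z^{\sigma k}$ with $c_k$ the Wright coefficient, and observe that
$$\left(\frac{1}{\alpha}z\partial_z+R\right)z^{A_1\sigma/\alpha_1-\alpha R+\sigma k}=\frac{\sigma}{\alpha\alpha_1}(A_1+k\alpha_1)\,z^{A_1\sigma/\alpha_1-\alpha R+\sigma k},$$
since the $-\alpha R$ in the exponent exactly cancels the additive $+R$ upon division by $\alpha$. The identity $(A_1+k\alpha_1)\Gamma(A_1+k\alpha_1)=\Gamma(A_1+1+k\alpha_1)$ then absorbs the new factor and promotes the first upper parameter from $(A_1,\alpha_1)$ to $(A_1+1,\alpha_1)$, while the rest of the Gamma structure is untouched; repackaging yields the right-hand side with the scalar $\sigma/(\alpha\alpha_1)$ in front.

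The main obstacle I expect is the bookkeeping in part (1): one has to make sure that the vanishing indices match exactly on both sides and that no $0/\infty$ indeterminacy appears. This becomes transparent once one notices that, after the shift, each side carries the factor $1/\Gamma(B_1+(j+m)\beta_1-\alpha)$ at the $j$-th term and therefore vanishes on precisely the same set of indices; after that, the remaining computation is a routine tidy-up of gamma quotients. Part (2) is formal and poses no real difficulty.
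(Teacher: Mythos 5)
Your argument is correct, and there is in fact no in-paper proof to compare it with: the paper states Lemma~\ref{le3} as a known result quoted from the literature (Kilbas's paper on fractional calculus of the generalized Wright function and the authors' earlier work), so your termwise series computation essentially reconstructs the standard derivation given in those references. Part (2) is the elementary calculation you describe, and in part (1) the cancellation $\Gamma(1+k)/k!=1$, the termwise Riemann--Liouville rule, the shift $k\mapsto j+m$, and the resulting reparametrization all check out against the stated right-hand side. Two small points deserve care. First, your phrase that the vanishing indices ``are exactly $0,1,\dots,m-1$'' is not literally true in general: for non-integer $\beta_1$ some indices $k\ge m$ may also make $B_1+k\beta_1-\alpha$ a non-positive integer; however, this does not harm the proof, since (as your closing remark observes) for $k\ge m$ the same factor $1/\Gamma(B_1+(j+m)\beta_1-\alpha)$ appears in the $j$-th term on both sides, so those terms vanish simultaneously, and only the vanishing of the terms $k<m$ — guaranteed by the minimality of $m$ — is actually needed. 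Second, the termwise application of $\frac{d^\alpha}{dz^\alpha}$ deserves one explicit sentence: each power $z^{B_1-1+\beta_1 k}$ has exponent greater than $-1$ (by $B_1>0$, $\beta_1>0$), so the Riemann--Liouville formula applies to each term, and the interchange of the fractional-integral and the sum is justified by uniform convergence on compact subsets of $(0,\infty)$, which follows from the absolute convergence under $\Delta>-1$ that you invoke.
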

\begin{lemma}\label{le5}
Let $\nu=\sum_{j=1}^{q}\beta_j-\sum_{i=1}^{p}\alpha_i> 0,$ $\mu=\sum\limits_{j=1}^{m}\beta_j-\sum\limits_{j=m+1}^{q}\beta_j-\sum\limits_{i=1}^{p}\alpha_i>0$, $\alpha\in \mathbb{R}_+$ and $a\in \mathbb{R}\setminus\{0\}$.  Then the following equalities hold.

(1) If $a>0$, then
\begin{multline*}
\frac{d^\alpha}{d z^\alpha}H_{p,q}^{m,0}\left[az^{-\alpha_p}\biggr\vert\begin{array}{c}
(A_i,\alpha_i)_{1,p-1}, (1, \alpha_p)\\
(B_j,\beta_j)_{1,q}
\end{array}\right]\\
=z^{-\alpha}H_{p,q}^{m,0}\left[az^{-\alpha_p}\biggr\vert\begin{array}{c}
(A_i,\alpha_i)_{1,p-1}, (1-\alpha, \alpha_p)\\
(B_j,\beta_j)_{1,q}
\end{array}\right],\quad z>0.
\end{multline*}

(2) If $m\ge 1,$ then
\begin{multline*}
\left(\frac{\beta_1}{\alpha_p}z\frac{d}{dz}+B_1\right)H_{p,q}^{m,0}\left[az^{-\alpha_p}\biggr\vert\begin{array}{c}
(A_i,\alpha_i)_{1,p-1}, (1, \alpha_p)\\
(B_j,\beta_j)_{1,q}
\end{array}\right]\\
=H_{p,q}^{m,0}\left[az^{-\alpha_p}\biggr\vert\begin{array}{c}
(A_i,\alpha_i)_{1,p-1}, (1, \alpha_p)\\
(B_1+1,\beta_1),(B_j,\beta_j)_{2,q}
\end{array}\right].
\end{multline*}
\end{lemma}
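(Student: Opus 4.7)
The plan is to attack both identities by inserting the Mellin--Barnes representation~(\ref{int1}) of $H_{p,q}^{m,0}$, moving the differential operator under the integral sign, and letting it act only on the elementary factor $z^{-\alpha_p s}$. Writing out the H-function in question (with $l=0$ and with the last upper parameter $(1,\alpha_p)$ singled out) as
\[
\frac{1}{2\pi i}\int_L \frac{\prod_{j=1}^m \Gamma(B_j-\beta_j s)}{\prod_{i=1}^{p-1}\Gamma(A_i-\alpha_i s)\,\Gamma(1-\alpha_p s)\,\prod_{j=m+1}^{q}\Gamma(1-B_j+\beta_j s)}\,a^{s}z^{-\alpha_p s}\,ds,
\]
both identities reduce to a single gamma-function manipulation on this kernel.

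For part~(1), I would apply the classical power rule
\[
\frac{d^{\alpha}}{dz^{\alpha}}z^{\lambda}=\frac{\Gamma(\lambda+1)}{\Gamma(\lambda+1-\alpha)}z^{\lambda-\alpha}
\]
with $\lambda=-\alpha_p s$. Under the integral this produces the extra multiplier $\Gamma(1-\alpha_p s)/\Gamma(1-\alpha-\alpha_p s)$, which cancels the $\Gamma(1-\alpha_p s)$ already present in the kernel's denominator and replaces it by $\Gamma((1-\alpha)-\alpha_p s)$, while the factor $z^{-\alpha}$ comes outside the integral. By the definition~(\ref{int1}) what remains is exactly the H-function of the right-hand side, with the parameter $(1,\alpha_p)$ replaced by $(1-\alpha,\alpha_p)$.

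For part~(2), I would use $z\frac{d}{dz}z^{-\alpha_p s}=-\alpha_p s\,z^{-\alpha_p s}$, so that applying $\frac{\beta_1}{\alpha_p}z\frac{d}{dz}+B_1$ under the integral is equivalent to multiplying the integrand by the scalar $B_1-\beta_1 s$. Since the kernel contains $\Gamma(B_1-\beta_1 s)$ in its numerator (this is where the hypothesis $m\geq 1$ is used), the functional equation $(B_1-\beta_1 s)\Gamma(B_1-\beta_1 s)=\Gamma(B_1+1-\beta_1 s)$ converts the product into the same kernel with $(B_1,\beta_1)$ replaced by $(B_1+1,\beta_1)$, which is the asserted right-hand side.

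The main obstacle is analytic rather than algebraic: justifying the interchange of differential operator and contour integral, and checking that the resulting integrand is again an admissible H-function. For the ordinary operator in~(2) this follows immediately from absolute convergence of~(\ref{int1}) under $\nu>0$. For the Riemann--Liouville derivative in~(1) I would exploit the hypothesis $\mu>0$, which is exactly the condition guaranteeing the stretched-exponential decay of $H^{m,0}_{p,q}$ recalled in the excerpt; this decay legitimises the term-by-term application of the power rule under the contour and also ensures that the shifted parameter $(1-\alpha,\alpha_p)$ still yields a convergent Mellin--Barnes integral.
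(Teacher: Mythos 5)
Your proof is essentially correct, but note that the paper does not prove Lemma~\ref{le5} at all: it is stated as a known result and delegated to the references \cite{Hnom2,glock,Khon,kilb2005}, so your Mellin--Barnes argument is supplying exactly the standard derivation those sources give. The algebraic core is right: with $l=0$ the kernel of (\ref{int1}) carries $\Gamma(1-\alpha_p s)$ in the denominator, so the Riemann--Liouville power rule applied to $z^{-\alpha_p s}$ produces the factor $\Gamma(1-\alpha_p s)/\Gamma(1-\alpha-\alpha_p s)$ and the prefactor $z^{-\alpha}$, turning $(1,\alpha_p)$ into $(1-\alpha,\alpha_p)$; and the Euler operator multiplies the integrand by $B_1-\beta_1 s$, which the functional equation absorbs into $\Gamma(B_1-\beta_1 s)$ (available precisely because $m\ge 1$), turning $(B_1,\beta_1)$ into $(B_1+1,\beta_1)$. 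Two points deserve tightening. First, the power rule $\frac{d^\alpha}{dz^\alpha}z^{\lambda}=\frac{\Gamma(\lambda+1)}{\Gamma(\lambda+1-\alpha)}z^{\lambda-\alpha}$ needs $\mathrm{Re}\,\lambda>-1$, i.e.\ $\mathrm{Re}(1-\alpha_p s)>0$ along $L$; you should say explicitly that the contour may be taken with $\gamma<1/\alpha_p$, which is possible here because an $H^{m,0}$ function has no left-hand sequence of poles, so $\gamma$ can be pushed arbitrarily far to the left of $\min_j B_j/\beta_j$. Second, the roles of the two hypotheses are slightly misattributed: by Stirling's formula it is $\mu>0$ that gives exponential decay of the Mellin--Barnes integrand along the vertical contour, and this is what licenses Fubini and differentiation under the integral sign, while the stretched-exponential decay of $H^{m,0}$ for large argument (the asymptotic quoted in the paper, governed by $\nu>0$) is the companion fact ensuring the Riemann--Liouville integral from the lower terminal converges, since $az^{-\alpha_p}\to\infty$ as $z\to 0^+$. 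With those justifications spelled out, your argument is a complete proof of the lemma that the paper itself only cites.
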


\section{Proof of Main Theorem}
First, we will derive solutions to the following equations
\begin{equation}\label{a1}
\frac{d^\alpha}{dz^\alpha}y(z)=
  z^m\left(a_nz^n\frac{d^n}{dz^n}y+a_{n-1}z^{n-1}\frac{d^{n-1}}{dz^{n-1}}y+\cdots+ a_1z\frac{d}{dz}y +a_0y\right), \quad z>0,
\end{equation} 
where $\alpha$ and $a_n$ ($i=0,\ldots,n$) are positive real numbers, $a_1,a_2,\ldots,a_{n-1}$ are real numbers, $m\in\mathbb{N}$. Here the fractional differentiation is defined in Riemann-Liouville manner. In \cite{Khon}, we derived exact solutions to the class of linear fractional differential equations of (\ref{a1}) when $m=0$. 

 In order to obtain the solutions of the equation given in (\ref{a1}), we consider the following characteristic polynomial
\begin{equation}\label{char}
    \widetilde  P(s)=a_n\prod\limits_{j=0}^{n-1}(s-j)+ a_{n-1}\prod\limits_{j=0}^{n-2}(s-j)+ \cdots+ a_1s +a_0
\end{equation}
  associated with the differential operator
  $$
  P(y)=a_nz^n\frac{d^n}{dz^n}y+a_{n-1}z^{n-1}\frac{d^{n-1}}{dz^{n-1}}y+\cdots+ a_1z\frac{d}{dz}y +a_0y.
  $$
  Let $s_1,\dots,s_n$ be the roots of the characteristic polynomial (\ref{char}). Then we can rewrite the right hand side of (\ref{a1}) in factorized form as following
  \begin{multline*}
    a_nz^n\frac{d^n}{dz^n}y+a_{n-1}z^{n-1}\frac{d^{n-1}}{dz^{n-1}}y+\cdots+ a_1z\frac{d}{dz}y +a_0y\\
    =a_n\left(z\frac{d}{dz}-s_n\right)\cdots\left(z\frac{d}{dz}-s_2\right)\left(z\frac{d}{dz}-s_1\right)y.
\end{multline*}
To present the solutions of the equation (\ref{a1}), as a preparation we show that the following identity holds.
\begin{lemma}\label{lem1}
Let $a\in\mathbb{R}_{+}, m\in \mathbb{N}$ and $b\in\mathbb{C}$. Then
\begin{equation}\label{lemeq}
    \frac{1}{\Gamma(1+ab+m)}\prod_{i=1}^{m}\Gamma\left(\frac{i}{a}+b+1\right)=\frac{1}{a^m\Gamma(1+ab)}\prod_{i=1}^m\Gamma\left(\frac{i}{a}+b\right).
\end{equation}
\end{lemma}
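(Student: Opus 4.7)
The plan is to prove the identity by induction on $m$, using only the functional equation $\Gamma(x+1)=x\Gamma(x)$, since the expressions on both sides differ only by shifts of the gamma arguments.

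For the base case $m=1$, I would apply $\Gamma(2+ab)=(1+ab)\Gamma(1+ab)$ to the denominator on the left and $\Gamma\!\left(\tfrac{1}{a}+b+1\right)=\left(\tfrac{1}{a}+b\right)\Gamma\!\left(\tfrac{1}{a}+b\right)$ to the numerator. After substituting $\tfrac{1}{a}+b=\tfrac{1+ab}{a}$, the factor $1+ab$ cancels with the denominator and leaves exactly $\tfrac{1}{a}\cdot\frac{\Gamma(\tfrac{1}{a}+b)}{\Gamma(1+ab)}$, matching the right-hand side.

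For the induction step, assume (\ref{lemeq}) holds for some $m$ and consider the $(m+1)$-case. I would split off the last factor of the product on the left, writing
\begin{equation*}
\frac{\Gamma\!\left(\tfrac{m+1}{a}+b+1\right)}{\Gamma(1+ab+m+1)}\prod_{i=1}^{m}\Gamma\!\left(\tfrac{i}{a}+b+1\right),
\end{equation*}
then apply $\Gamma(1+ab+m+1)=(1+ab+m)\Gamma(1+ab+m)$, invoke the inductive hypothesis on the remaining product, and apply $\Gamma\!\left(\tfrac{m+1}{a}+b+1\right)=\left(\tfrac{m+1}{a}+b\right)\Gamma\!\left(\tfrac{m+1}{a}+b\right)$. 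The only non-trivial simplification is the key algebraic identity $\tfrac{m+1}{a}+b=\tfrac{m+1+ab}{a}$, which makes $\frac{\tfrac{m+1}{a}+b}{1+ab+m}=\tfrac{1}{a}$; this produces the missing factor of $1/a$ needed to upgrade $a^m$ to $a^{m+1}$, and the newly introduced $\Gamma\!\left(\tfrac{m+1}{a}+b\right)$ extends the product on the right to run up to $m+1$.

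There is essentially no obstacle here; the identity is entirely elementary once the shift $\Gamma(x+1)=x\Gamma(x)$ is applied on both sides. The only place one must be careful is recognizing the algebraic coincidence $\tfrac{m+1}{a}+b=\tfrac{1+ab+m}{a}$, which is exactly what produces the single factor of $1/a$ per inductive step and explains the $a^m$ on the right-hand side. An alternative, non-inductive derivation would apply the Gauss multiplication formula to $\Gamma(1+ab+m)$ and $\Gamma(1+ab)$, but the inductive route is cleaner and avoids introducing $(2\pi)^{(1-a)/2}$ factors that would have to cancel.
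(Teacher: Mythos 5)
Your inductive proof is correct, and it is essentially the same argument as the paper's: the paper simply applies $\Gamma(x+1)=x\Gamma(x)$ to every factor at once, writing $\prod_{i=1}^m\bigl(\tfrac{i}{a}+b\bigr)=a^{-m}\prod_{i=1}^m(i+ab)=a^{-m}\,\Gamma(1+ab+m)/\Gamma(1+ab)$, which is exactly what your induction unrolls one step at a time.
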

\begin{proof}
By noticing the following identities
$$
\prod_{i=1}^m\left(\frac{i}{a}+b\right)\Gamma\left(\frac{i}{a}+b\right)=\frac{1}{a^m}\prod_{i=1}^m(i+ab)\prod_{i=1}^m\Gamma\left(\frac{i}{a}+b\right)=\frac{\Gamma(1+ab+m)}{a^m\Gamma(1+ab)}\prod_{i=1}^m\Gamma\left(\frac{i}{a}+b\right).$$
Hence, the proof is completed. 
\end{proof}
Now we are ready to give the solutions of (\ref{a1}).
\begin{proposition}\label{thm1}
The equation given in (\ref{a1}) has the following solutions.
\begin{enumerate}
    \item For $\alpha<n,$ the solution is expressed as
    \begin{equation}\label{uuu2}
    y(z)=c_1H_{1,m+n}^{m+n,0} \left[ \frac{z^{-(\alpha+m)}}{a_n(\alpha+m)^{m+n}} \left| \begin{matrix}
( 1 , \alpha+m )  \\
\left( -\frac{s_j}{\alpha+m} , 1 \right)_{1,n}, \left(\frac{j}{\alpha+m} , 1  \right)_{1,m}   \end{matrix}\right.\right];  
    \end{equation}
    \item For $\alpha>n,$ the solution is expressed as
    \begin{equation}\label{uuu3}
        y(z)=\sum_{k=1}^{[\alpha]+1} c_kz^{\alpha-k} {}_{m+n+1}\Psi_1 \left[a_{n}(\alpha+m)^{m+n}z^{\alpha+m}\left|\begin{matrix}\left( \frac{\alpha-k-s_i}{\alpha+m} , 1 \right)_{1,n}, \left(\frac{\alpha-k+i}{\alpha+m},1\right)_{1,m} , ( 1 , 1 )\\ 
( 1+\alpha-k , \alpha+m )  \end{matrix}\right.\right].
    \end{equation}
Here  $s_1,\dots,s_n$ are the roots of the characteristic polynomial (\ref{char}). 
\end{enumerate}
\end{proposition}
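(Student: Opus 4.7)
The plan is to verify both formulas by direct substitution into (\ref{a1}), exploiting the factorization
\[
P(y) = a_n\left(z\tfrac{d}{dz}-s_n\right)\cdots\left(z\tfrac{d}{dz}-s_1\right)y
\]
of the right-hand side. Each of the operators $d^\alpha/dz^\alpha$, $z\tfrac{d}{dz}-s_j$, and multiplication by $z^m$ acts on the relevant special function by a transparent shift of parameters recorded by Lemmas \ref{le3} and \ref{le5}; the final matching of the two sides then comes down to a single gamma-function manipulation, which is precisely the content of Lemma \ref{lem1}.

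For part (1), with $y(z)=c_1 H^{m+n,0}_{1,m+n}[\xi\mid\cdots]$ and $\xi=z^{-(\alpha+m)}/[a_n(\alpha+m)^{m+n}]$, Lemma \ref{le5}(1) applied with $\alpha_p=\alpha+m$ yields $d^\alpha y/dz^\alpha = c_1 z^{-\alpha}$ times the same $H$-function with top parameter shifted from $(1,\alpha+m)$ to $(1-\alpha,\alpha+m)$. Applying Lemma \ref{le5}(2) successively $n$ times, and using the symmetry of the first $m+n$ bottom parameters to move each chosen $(-s_j/(\alpha+m),1)$ into the lead slot, shifts each of these parameters to $(1-s_j/(\alpha+m),1)$ and produces an overall factor $(\alpha+m)^n$. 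Multiplication by $a_n z^m$ together with identity (\ref{eq11}) then reduces the comparison to an $H$-function identity that fails to be tautological only in the $m$ auxiliary parameters $(j/(\alpha+m),1)_{1,m}$, and Lemma \ref{lem1} applied with $a=\alpha+m$ and $b$ equal to the Mellin variable divided by $\alpha+m$ supplies the missing factor $(\alpha+m)^{-m}$ and converts one product of $\Gamma$-factors into the other.

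For part (2), I substitute each summand $y_k(z)=c_k z^{\alpha-k}\,{}_{m+n+1}\Psi_1[a_n(\alpha+m)^{m+n}z^{\alpha+m}\mid\cdots]$ individually and run the parallel scheme with Lemma \ref{le3} in place of Lemma \ref{le5}. Lemma \ref{le3}(1), taken with $B_1=1+\alpha-k$ and $\beta_1=\alpha+m$ (and the smallest admissible non-negative shift), computes $d^\alpha y_k/dz^\alpha$ as a Wright function with the appropriate power of $z$ out front. Lemma \ref{le3}(2), applied iteratively with $\alpha_1=1$, $\sigma=\alpha+m$, and $R$ chosen so that $A_1\sigma/\alpha_1-\alpha R=\alpha-k$, computes each $z\tfrac{d}{dz}-s_j$ factor by shifting the corresponding top parameter. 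Multiplication by $z^m$ is absorbed into the power $z^{\alpha-k}$ via a series re-indexing, and Lemma \ref{lem1} again aligns the gamma-products on the two sides.

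The main obstacle is the parameter bookkeeping: tracking how each application of Lemma \ref{le3} or Lemma \ref{le5} shifts individual parameters and scales by powers of $\alpha+m$, and verifying that the accumulated shifts on the right-hand side, after (\ref{eq11}) or series re-indexing, reproduce exactly the single shift produced by $d^\alpha/dz^\alpha$ on the left. The normalization $a_n(\alpha+m)^{m+n}$ inside the argument of each special function is designed precisely so that, once Lemma \ref{lem1} absorbs the stray $(\alpha+m)^m$ coming from the $m$ auxiliary bottom parameters, the scalar factors $a_n$ and $(\alpha+m)^n$ from the operator applications cancel exactly and the identity closes.
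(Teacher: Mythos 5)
Your proposal follows essentially the same route as the paper: verify both formulas by direct substitution, factor the Euler operator through the roots $s_j$, shift parameters via Lemma~\ref{le5} (H-function case) and Lemma~\ref{le3} (Wright-function case), and close the comparison of gamma-products with Lemma~\ref{lem1}. The only inconsequential slip is in part (1): the substitution in Lemma~\ref{lem1} should be $a=\alpha+m$, $b=-s$ with $s$ the Mellin variable itself (not the Mellin variable divided by $\alpha+m$), which is exactly what the paper does.
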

\begin{proof}
Because the convergence
conditions of H-functions and generalized wright functions holds the functions in (\ref{uuu2}) and (\ref{uuu3}), it is sufficient to show that the functions satisfy the equation in (\ref{a1}).
Firstly, we rewrite the right hand side of the equation (\ref{a1}) in the factorized form
\begin{align}\label{uuu5}
    & z^{m}\left(a_nz^n\frac{d^n}{dz^n}+a_{n-1}z^{n-1}\frac{d^{n-1}}{dz^{n-1}}+\cdots+ a_1z\frac{d}{dz} +a_0\right)\nonumber\\
    &~~~~\left( c_1H_{1,m+n}^{m+n,0} \left[ \frac{z^{-(\alpha+m)}}{a_n(\alpha+m)^{m+n}} \left| \begin{matrix}
( 1 , \alpha+m )  \\
 \left( -\frac{s_j}{\alpha+m} , 1 \right)_{1,n}, & \left(\frac{j}{\alpha+m} , 1  \right)_{1,m} &  \end{matrix}\right.\right]\right)\nonumber\\ 
    &~=c_1a_n(\alpha+m)^n z^m\left(\frac{1}{\alpha+m}z\frac{d}{dz}-\frac{s_n}{\alpha+m}\right)\cdots\left(\frac{1}{\alpha+m}z\frac{d}{dz}-\frac{s_1}{\alpha+m}\right)\nonumber\\
    & ~~~~\left( H_{1,m+n}^{m+n,0} \left[ \frac{z^{-(\alpha+m)}}{a_n(\alpha+m)^{m+n}} \left| \begin{matrix}
( 1 , \alpha+m )  \\
 \left( -\frac{s_j}{\alpha+m} , 1 \right)_{1,n}, & \left(\frac{j}{\alpha+m} , 1  \right)_{1,m} &  \end{matrix}\right.\right]\right).
\end{align}
From Lemma~\ref{le5}, we get the following identity for $k=1,\ldots, n$
\begin{align*}
& \left(\frac{1}{\alpha+m}z\frac{d}{dz}-\frac{s_k}{\alpha+m}\right)\left(H_{1,m+n}^{m+n,0} \left[ \frac{z^{-(\alpha+m)}}{a_n(\alpha+m)^{m+n}} \left| \begin{matrix}
( 1 , \alpha+m )  \\
\left( -\frac{s_j}{\alpha+m} , 1 \right)_{1,n}& \left(\frac{j}{\alpha+m} , 1  \right)_{1,m} &  \end{matrix}\right.\right]\right)\\
&=H_{1,m+n}^{m+n,0} \left[ \frac{z^{-(\alpha+m)}}{a_n(\alpha+m)^{m+n}} \left| \begin{matrix}
( 1 , \alpha+m )  \\
\left( -\frac{s_j}{\alpha+m} , 1 \right)_{1,k-1}, \left(1 -\frac{s_k}{\alpha+m} , 1 \right), \left( -\frac{s_j}{\alpha+m} , 1 \right)_{k+1,n}, & \left(\frac{j}{\alpha+m} , 1  \right)_{1,m} &  \end{matrix}\right.\right].
\end{align*}
Then applying the above identity in (\ref{uuu5}), the right hand side of the equation becomes
\begin{align*}
& c_1a_nz^m(\alpha+m)^n H_{1,m+n}^{m+n,0} \left[ \frac{z^{-(\alpha+m)}}{a_n(\alpha+m)^{m+n}} \left| \begin{matrix}
( 1 , \alpha+m )  \\
 ( 1-\frac{s_j}{\alpha+m} , 1 )_{1,n} & (\frac{j}{\alpha+m} , 1  )_{1,m} &  \end{matrix}\right.\right]. 
\end{align*}
Now, let us begin the calculation of left hand side of the equation. By applying the Lemma~\ref{le5}, we get
\begin{equation*}
 \frac{d^{\alpha}}{dz^{\alpha}}y=c_1z^{-\alpha}H_{1,m+n}^{m+n,0} \left[ \frac{z^{-(\alpha+m)}}{a_n(\alpha+m)^{m+n}} \left| \begin{matrix}
( 1-\alpha , \alpha+m )\\( -\frac{s_j}{\alpha+m} , 1 )_{1,n}, (\frac{j}{\alpha+m} , 1  )_{1,m}  \end{matrix}\right.\right].   
\end{equation*}
Then applying (\ref{eq11}),  the left hand side is
$$c_1a_nz^m(\alpha+n)^{m+n}H_{1,m+n}^{m+n,0} \left[ \frac{z^{-(\alpha+m)}}{a_n(\alpha+m)^{m+n}} \left| \begin{matrix}
( 1+m , \alpha+m )\\( 1-\frac{s_j}{\alpha+m} , 1 )_{1,n}, (1+\frac{j}{\alpha+m} , 1  )_{1,m}  \end{matrix}\right.\right].$$
Hence, the left hand side becomes
\begin{equation*}
\frac{d^{\alpha}}{dz^{\alpha}}y=
c_1a_nz^m(\alpha+n)^{m+n}\frac{1}{2\pi i}\int_{L}{\frac{\prod\limits_{j=1}^n\Gamma\left( 1-\frac{s_j}{\alpha+m}-s\right)\prod\limits_{j=1}^{m} \Gamma\left(1+\frac{j}{\alpha+m}-s\right)}{\Gamma\left(1+m-(\alpha+m)s\right)}\cdot\left(\frac{z^{-(\alpha+m)}}{a_n(\alpha+m)^{m+n}}\right)^{s}\,d} s
\end{equation*}
By substituting $a=\alpha+m,$ $b=-s$ in Lemma~\ref{lem1}, we see that
\begin{align*}
&\frac{\prod\limits_{j=1}^{m} \Gamma\left(1+\frac{j}{\alpha+m}-s\right)}{\Gamma(1+m-(\alpha+m)s)}= \frac{1}{(\alpha+m)^m}\cdot\frac{\prod\limits_{j=1}^{m}\Gamma\left(\frac{j}{\alpha+m}-s\right)}{\Gamma(1-(\alpha+m)s)},
\end{align*}
which concludes the right and left hand sides of the equations are equal.

The second assertion of the theorem consists of $[\alpha]+1$ summands. Since the equation is linear, it is sufficient to show that it holds for single $k$th summand. For the fixed summand, we  will take the similar steps as in the previous case. From Lemma~\ref{le5}, we get the following identity for $l=1,\ldots, n$
\begin{align*}
& \left(\frac{1}{\alpha}z\frac{d}{dz}-\frac{s_l}{\alpha}\right)\left(   z^{\alpha-k}{}_{m+n+1}\Psi_1\left[a_{n}(\alpha+m)^{m+n}z^{\alpha+m}\left|\begin{matrix} ( \frac{\alpha-k-s_i}{\alpha+m} , 1 )_{1,n}, (\frac{\alpha-k+i}{\alpha+m},1)_{1,m} , ( 1 , 1 )\\ 
( 1+\alpha-k , \alpha+m )  \end{matrix}\right.\right] \right)=\frac{\alpha+m}{\alpha}z^{\alpha-k}\\
&~ \times {}_{m+n+1}\Psi_1 \left[a_{n}(\alpha+m)^{m+n}z^{\alpha+m}\left|\begin{matrix}( \frac{\alpha-k-s_i}{\alpha+m} , 1 )_{1,l-1}, ( \frac{\alpha-k-s_l}{\alpha+m}+1 , 1 ), ( \frac{\alpha-k-s_i}{\alpha+m} , 1 )_{l+1,n}, (\frac{\alpha-k+i}{\alpha+m},1)_{1,m} , ( 1 , 1 )\\ 
( 1+\alpha-k , \alpha+m )  \end{matrix}\right.\right],
   \end{align*}
the right hand side of the equation becomes
\begin{align*}
&a_n \alpha^{n}z^m\left(\frac{1}{\alpha}z\frac{d}{dz}-\frac{s_n}{\alpha}\right) \cdots\left(\frac{1}{\alpha}z\frac{d}{dz}-\frac{s_2}{\alpha}\right)\left(\frac{1}{\alpha}z\frac{d}{dz}-\frac{s_1}{\alpha}\right)\\
&~~~~\left(   z^{\alpha-k}{}_{m+n+1}\Psi_1 \left[a_{n}(\alpha+m)^{m+n}z^{\alpha+m}\left|\begin{matrix}( \frac{\alpha-k-s_i}{\alpha+m} , 1 )_{1,n}, (\frac{\alpha-k+i}{\alpha+m},1)_{1,m} , ( 1 , 1 )\\ 
( 1+\alpha-k , \alpha+m )  \end{matrix}\right.\right] \right)\\
& =a_n (\alpha+m)^{n} z^{\alpha+m-k}{}_{m+n+1}\Psi_1 \left[a_{n}(\alpha+m)^{m+n}z^{\alpha+m}\left|\begin{matrix}( \frac{\alpha-k-s_i}{\alpha+m}+1 , 1 )_{1,n}, (\frac{\alpha-k+i}{\alpha+m},1)_{1,m} , ( 1 , 1 )\\ 
( 1+\alpha-k , \alpha+m )  \end{matrix}\right.\right].
   \end{align*}
   The left hand side of the equation is calculated as following
    \begin{align*}
   &\frac{d^{\alpha}}{dz
^{\alpha}}z^{\alpha-k} {}_{m+n+1}\Psi_1
\left[a_{n}(\alpha+m)^{m+n}z^{\alpha+m}\left|\begin{matrix}(
\frac{\alpha-k-s_i}{\alpha+m} ,1)_{1,n},(\frac{\alpha-k+i}{\alpha+m},1)_{1,m},( 1 , 1 )\\ 
( 1+\alpha-k , \alpha+m ) \end{matrix}\right.\right]\\
&~ =b_0a_n(\alpha+m)^{m+n}z^{\alpha+m-k}\\
&~~~~\times{}_{m+n+2}\Psi_2
\left[a_{n}(\alpha+m)^{m+n}z^{\alpha+m}\left|\begin{matrix}( 1 , 1) , (
\frac{\alpha-k-s_i}{\alpha+m}+1 ,1)_{1,n},(\frac{\alpha-k+i}{\alpha+m}+1,1)_{1,m},  ( 2 , 1 )\\ 
( 2 , 1),( 1+\alpha-k+m , \alpha+m ) \end{matrix}\right.\right]\\
&~ =b_0a_n(\alpha+m)^{m+n}z^{\alpha+m-k}\\
&~~~~\times {}_{m+n+1}\Psi_1
\left[a_{n}(\alpha+m)^{m+n}z^{\alpha+m}\left|\begin{matrix} (
\frac{\alpha-k-s_i}{\alpha+m}+1 ,1)_{1,n},(\frac{\alpha-k+i}{\alpha+m}+1,1)_{1,m},  ( 1 , 1 )\\ ( 1+\alpha-k+m , \alpha+m ) \end{matrix}\right.\right]\\
&~ =b_0a_n(\alpha+m)^{m+n}z^{\alpha+m-k}\\
&~~~~\times\sum \limits_{j=0}^{\infty}\frac{ \prod\limits_{i=1}^{n}\Gamma
\left( \frac{\alpha-k-s_i}{\alpha+m}+1+j\right)  \prod\limits_{i=1}^{m}\Gamma  \left(
\frac{\alpha-k+i}{\alpha+m}+1+j\right) \Gamma(1+j)  }{\Gamma(1+\alpha-k+m+(\alpha+m)j)}\cdot\frac{
\left(a_{n}(\alpha+m)^{m+n}z^{\alpha+m}\right)^{j}}{j!}
\end{align*}
which equals to the right hand side of the equation via Lemma~\ref{lem1} with $a=\alpha+m,~b=\frac{\alpha-k}{\alpha+m}+j$.
\end{proof}
So, we have formulated exact solutions to the fractional differential equations given in (\ref{a1}). In the following, we will demonstrate how to apply the results to (\ref{appl}).

By applying the substitution
\begin{equation}\label{ansatz}
u=x^{a}\varphi(x^{\frac{d-2}{\alpha+m}} t),\quad  x^{\frac{d-2}{\alpha+m}}t=z, ~(a\in\mathbb{R})
\end{equation}
in (\ref{appl}), we have
\begin{multline}\label{a3}
    \frac{d^{\alpha}}
  {dz^{\alpha}}\varphi=
 z^m \left[\frac{A(d-2)^2}{(\alpha+m)^2}z^2\varphi_{zz}+\frac{d-2}{\alpha+m}\left(\frac{A(d-2)}{\alpha+m}+B\right.\right.\\ \Big.\Big.+2Aa-A\Big)z\varphi_{z}+(Aa^2-Aa+Ba+C)\varphi\Big].
\end{multline}
Let's take a look at the following characteristic equation of (\ref{a3})
\begin{equation*}
  \frac{A(d-2)^2}{(\alpha+m)^2}s(s-1) +\frac{d-2}
  {\alpha+m}\left(\frac{A(d-2)}{\alpha+m}+B+A(2a-1)\right)s+Aa(a-1)+Ba+AC=0   
    \end{equation*}
where $d\neq 2$. Finally, according to Proposition~\ref{thm1} we obtain the following solutions to equation (\ref{a3})
\begin{enumerate}
    \item For $0<\alpha<2,$ 
    \begin{equation}
    \varphi(z)=c_1H_{1,m+2}^{m+2,0} \left[ \frac{z^{-(\alpha+m)}}{A(d-2)^2(\alpha+m)^{m}} \left| \begin{matrix}
( 1 , \alpha+m )  \\
( -\frac{s_1}{\alpha+m} , 1 ) ,( -\frac{s_2}{\alpha+m} , 1 ), (\frac{j}{\alpha+m} , 1  )_{1,m}   \end{matrix}\right.\right],    
    \end{equation}
    \item For $\alpha>2,$
    \begin{equation}
   \varphi(z)= \sum_{k=1}^{[\alpha]+1} c_kz^{\alpha-k} {}_{m+3}\Psi_1 \left[A(d-2)^2(\alpha+m)^{m}z^{\alpha+m}\left|\begin{matrix}( \frac{\alpha-k-s_1}{\alpha+m} , 1 ),( \frac{\alpha-k-s_2}{\alpha+m} , 1 ) , (\frac{\alpha-k+i}{\alpha+m},1)_{1,m} , ( 1 , 1 )\\ 
( 1+\alpha-k , \alpha+m )  \end{matrix}\right.\right].      
    \end{equation}
\end{enumerate}
The case $d=2$, we get the following solutions to equation (\ref{a3})
 \begin{equation}
   \varphi(z)= \sum_{k=1}^{[\alpha]+1} c_kz^{\alpha-k} {}_{m+1}\Psi_1 \left[(Aa^2-Aa+Ba+C)(\alpha+m)^{m}z^{\alpha+m}\left|\begin{matrix} (\frac{\alpha-k+i}{\alpha+m},1)_{1,m} , ( 1 , 1 )\\ 
( 1+\alpha-k , \alpha+m )  \end{matrix}\right.\right].      
    \end{equation}
Under the substitution (\ref{ansatz}) we get our desired results in Theorem~\ref{main}.

\section*{Acknowledgments}

This work was partially supported by the National University of Mongolia (Grant No.P2018-3584) and by the Mongolian
Foundation for Science and Technology (Grant
No.SHUTBIKHKHZG-2022/164).


\begin{thebibliography}{30}

\bibitem{old1974} K.B.~Oldham, J.~Spanier, \textit{The Fractional Calculus}, Academic Press, New York-London (1974).

\bibitem{wyss1986} W.~Wyss, \textit{The fractional diffusion equation}, J. Math. Phys. 27 (1986), no. 11, pp. 2782-2785.
         
\bibitem{schneider1989} W.~R.~Schneider, W.~Wyss, \textit{Fractional diffusion and wave  equations}, J. Math. Phys. 30 (1989), no. 1, pp. 134-144.

\bibitem{metz94} R.~Metzler, W.~G.~Glockle, T.~F.~Nonnenmacher \textit{Fractional model equation for anomalous diffusion}, Physica A 211 (1994) 13--24.

\bibitem{main96} F.~Mainardi, The fundamental solutions for the fractional diffusion-wave equation, Appl. Math. Lett. 9 (1996), no. 6, 23--28.

\bibitem{Luchko} E. Buckwar, Yu. Luchko \textit{Invariance of a partial differential equation of fractional order under Lie group of scaling transformations}, J. Math. Anal. Appl. 227 (1998), 81–97.

\bibitem{pod1999}I.~Podlubny, \textit{Fractional Differential Equations}, Academic Press (1999).

\bibitem{metzler2000} R.~Metzler, J.~Klafter, \textit{The random walk’s guide to anomalous diffusion: A fractional dynamics approach}, Physics Report 339, No 1 (2000), 1–77.

\bibitem{main03} F.~Mainardi, G.~Pagnini, The Wright functions as solutions of the time-fractional diffusion equation, Appl. Math. Comput. 141 (2003), no. 1, 51--62.

\bibitem{kilb2005} A.~A.~Kilbas, \textit{Fractional calculus of the generalized Wright function}, Fract. Calc. Appl. Anal. 8 (2) (2005), 113–-126.

\bibitem{kilbas2006} A.A.~Kilbas, H.M.~Srivastava, J.J.~Trujillo, \textit{Theory and Applications of Fractional Differential Equations}, Elsevier Science B.V., Amsterdam (2006).

\bibitem{main07} F.~Mainardi, G.~Pagnini, The role of Fox-Wright functions in fractional sub-diffusion of distributed order, J. Comput. Appl. Math. 207 (2007), no. 2, 245--257.

\bibitem{main10} F.~Mainardi, A.~Mura, G.~Pagnini,
The M-Wright function in time-fractional diffusion processes: a tutorial survey, Int. J. Differ. Equ. (2010) 104505, 29pp.












\bibitem{tar2019} Ed. V.~Tarasov, \textit{Handook of Fractional Calculus with Applications Volumes 4 and 5: Applications in Physics}, De Gruyter, Germany (2019).



\bibitem{hents} H.G.E.~Hentschel, I.~Procaccia \textit{Relative diffusion in turbulent media: The fractal dimension of clouds}, Physical Review A 29 (3), 1984, 1461--1470.

   \bibitem{Hnom1} A.~M.~Mathai, R.~K.~Saxena, \textit{The H-Function with Applications in Statistics and Other Disciplines}, John Wiley, New York-London-Sidney (1978). 

\bibitem{Hnom2} A.~M.~Mathai, R.~K.~Saxena, H.~J.~Haubold, \textit{The H-Function, Theory and Applications}, Springer, New York (2010).




        \bibitem{Kiryakova} V.~Kiryakova, \textit{Fractional calculus operators of special functions? The result is well predictable!}, Chaos, Solitons and Fractals 102, 2017, 2--15.

        \bibitem{glock} W.~G.~Glockle, T.~F.~Nonnenmacher, \textit{Fox function representation of Non-Debye relaxation processes}, J. Statist. Phys. 71 (3/4), 1993, 741--757.













        \bibitem{Khon} Kh. Dorjgotov, H. Ochiai, U. Zunderiya \textit{On solutions of linear fractional differential equations and systems thereof}, Fract. Calc.Appl. Anal. No22,(2019),479–494.
        
        
        
        
       \bibitem{awad} E.~Awad, R.~Metzler \textit{Closed form multi-dimensional solution and symptotic behaviours for subdiffusive processes with crossoverrs: II. Accelerating case}, J. Phys. A: Math. Theor. \textbf{55} (2022) 205003. 
       
       \bibitem{klaft87} J.~Klafter, A.~Blumen, M.~F.~Shlesinger \textit{Stochastic pathway to anomalous diffusion,} Phys. Rev. A 35 (7), 1987, 3081--3085.
       
     
    


   
       
\end{thebibliography}
\end{document}